\newcommand{\Cdb}{\ensuremath{\mathbb{C}}}
\newcommand{\Ddb}{\ensuremath{\mathbb{D}}}
\newcommand{\Pdb}{\ensuremath{\mathbb{P}}}
\newcommand{\Rdb}{\ensuremath{\mathbb{R}}}
\newcommand{\Zdb}{\ensuremath{\mathbb{Z}}}
\newcommand{\D}{\mbox{${\mathcal D}$}}
\newcommand{\norm}[1]{\Vert#1\Vert}
\newcommand{\bignorm}[1]{\bigl\Vert#1\bigr\Vert}
\newcommand{\Bignorm}[1]{\Bigl\Vert#1\Bigr\Vert}
\newtheorem{theorem}{Theorem}[section]
\newtheorem{lemma}[theorem]{Lemma}
\newtheorem{corollary}[theorem]{Corollary}
\theoremstyle{remark}
\newtheorem{remark}[theorem]{\bf Remark}
\theoremstyle{definition}
\numberwithin{equation}{section}
\begin{document}

\title[Counterexamples $R$-boundedness]
{New counterexamples on Ritt operators, sectorial operators 
and $R$-boundedness}

\author[L. Arnold]{Loris Arnold}
\email{loris.arnold@univ-fcomte.fr}
\address{Laboratoire de Math\'ematiques de Besan\c con, UMR 6623, 
CNRS, Universit\'e Bourgogne Franche-Comt\'e,
25030 Besan\c{c}on Cedex, FRANCE}

\author[C. Le Merdy]{Christian Le Merdy}
\email{clemerdy@univ-fcomte.fr}
\address{Laboratoire de Math\'ematiques de Besan\c con, UMR 6623, 
CNRS, Universit\'e Bourgogne Franche-Comt\'e,
25030 Besan\c{c}on Cedex, FRANCE}

\date{\today}

\maketitle

\begin{abstract}
Let $\D$ be a Schauder decomposition on some 
Banach space $X$. We prove that if $\D$
is not $R$-Schauder, then there exists
a Ritt operator $T\in B(X)$ which is a multiplier with respect to $\D$, such that 
the set $\{T^n\, :\, n\geq 0\}$ is not $R$-bounded. Likewise we prove that there
exists a bounded sectorial operator $A$ of type $0$ on $X$ 
which is a multiplier with respect to $\D$, such that 
the set $\{e^{-tA}\, : \, t\geq 0\}$ 
is not $R$-bounded.
\end{abstract}

\vskip 1cm
\noindent
{\it 2000 Mathematics Subject Classification:} 47A99, 46B15.
 
\vskip 1cm 
$R$-boundedness plays a prominent role in the study of sectorial operators and Ritt operators.
Namely the notions of $R$-sectorial operators and $R$-Ritt operators have been instrumental 
in the development of $H^\infty$-functional calculus, square function estimates and applications
to maximal regularity and to many other aspects of the harmonic analysis of semigroups
(in either the continuous or the  discrete case). 

The existence of sectorial operators which are not $R$-sectorial was discovered 
by Kalton and Lancien in their paper solving the $L^p$-maximal regularity problem \cite{KL}. 
The existence of Ritt operators which are not $R$-Ritt was established a bit later by Portal \cite{P}.
More recently, Fackler \cite{F} extended the work of Kalton-Lancien in various directions.
In contrast with \cite{KL}, which focused on existence results, \cite{F} supplied explicit constructions of 
sectorial operators which are not $R$-sectorial. Further it is easy to derive from the latter paper
explicit constructions of Ritt operators which are not $R$-Ritt. In \cite{F,KL,P}, 
sectorial operators which are not $R$-sectorial (resp. 
Ritt operators which are not $R$-Ritt) are defined as multipliers with respect to
Schauder decompositions having various ``bad" properties. In particular,
these Schauder decompositions cannot be $R$-Schauder (see Lemma \ref{R-Venni}).

The aim of this note is two-fold. First we show that given {\it any} Schauder decomposition $\D$
which is not $R$-Schauder, one can define a sectorial operator $A$ which is a multiplier with
respect to $\D$ and which is not $R$-sectorial (resp. a Ritt operator $T$ which is a multiplier with
respect to $\D$ and which is not $R$-Ritt). Second we strengthen these negative results in both
cases by showing that $A$ can be chosen bounded and such that $\{e^{-tA}\, : \, t\geq 0\}$ 
is not $R$-bounded, whereas $T$ is taken such that $\{T^n : \, n\geq 0\}$ 
is not $R$-bounded. (See Remark \ref{Rk} for more comments.)

In addition to the above mentioned papers, 
we refer the reader to \cite{B,KW,LM1,W} for relevant information on $R$-sectorial and $R$-Ritt operators.
We also mention \cite{LLM} which contains examples of Ritt operators which are not $R$-Ritt.
They are of a different nature to those in \cite{P}.

We now introduce the relevant definitions and constructions
to be used in this paper. 
Throughout we let $X$ be a complex Banach space and 
we let $B(X)$ denote the Banach algebra of all bounded operators on $X$.
We let $I_X$ denote the identity operator on $X$.

Let $(\varepsilon_j)_{j\geq 1}$ be an independent sequence of Rademacher variables on
some probability space $(\Omega, d\Pdb)$. Given any $x_1,\ldots, x_k$ in $X$, we set 
$$
\Bignorm{\sum_{j=1}^k\varepsilon_j\otimes x_j}_{R,X} = \int_\Omega
\Bignorm{\sum_{j=1}^k\varepsilon_j(u)x_j}_X\, d\Pdb(u).
$$
Then we say that a subset $F\subset B(X)$ is $R$-bounded provided that there exists a
constant $K\geq 0$ such that for any $k\geq 1$, for any 
$T_1,\ldots, T_k$ in $F$ and for any $x_1,\ldots, x_k$ in $X$,
$$
\Bignorm{\sum_{j=1}^k\varepsilon_j\otimes T_j(x_j)}_{R,X} \,\leq\, K\,
\Bignorm{\sum_{j=1}^k\varepsilon_j\otimes x_j}_{R,X}.
$$
We refer the reader to e.g. \cite[Chap. 8]{H} for basic information on $R$-boundedness.

For any $\omega\in(0,\pi)$, we let $\Sigma_{\omega}=\{\lambda\in\Cdb^*\, :\,
\vert {\rm Arg}(\lambda)\vert<\omega\}$. Let $A$ be a densely defined closed 
operator $A\colon D(A)\to X$, with domain $D(A)\subset X$. Let $\sigma(A)$ 
denote the spectrum of $A$ and let $R(\lambda,A)=(\lambda I_X -A)^{-1}$
denote the resolvent operator for $\lambda\notin\sigma(A)$. We say that $A$ 
is sectorial of type $\omega\in(0,\pi)$ if 
$\sigma(A)\subset\overline{\Sigma_\omega}$ and for any $\theta\in(\omega,\pi)$, the set
\begin{equation}\label{Sectorial}
\bigl\{ \lambda R(\lambda,A)\, :\, \lambda\in\Cdb\setminus\overline{\Sigma_\theta}\,\bigr\}
\end{equation}
is bounded. We further say that $A$ is sectorial of type $0$ if it is sectorial 
of type $\omega$ for any $\omega\in(0,\pi)$.

Note that if $A$ is sectorial of type
$\omega$ and $A$ is invertible, then $A^{-1}\in B(X)$ is sectorial of type 
$\omega$ as well. This readily follows from the fact that for any 
$\lambda\notin\overline{\Sigma_\omega}$,
we have $\lambda^{-1}\notin\overline{\Sigma_\omega}$ and
\begin{equation}\label{Inverse}
\lambda R(\lambda,A^{-1}) \,=\, I_X - \lambda^{-1} R(\lambda^{-1},A).
\end{equation}

We recall that $A$ is sectorial of type
$<\frac{\pi}{2}$ if and only if $-A$ generates a bounded analytic semigroup.
In this case, the latter is denoted by $(e^{-tA})_{t\geq 0}$.

Next we say that $A$ is $R$-sectorial of $R$-type $\omega\in(0,\pi)$ if $A$
is sectorial of type $\omega$ and for any $\theta\in(\omega,\pi)$, the set 
(\ref{Sectorial}) is $R$-bounded.

The following lemma is a straightforward consequence of (\ref{Inverse}).

\begin{lemma}\label{R-Inv}
Let $A$ be $R$-sectorial of $R$-type $\omega$ and assume that $A$ is invertible.
Then $A^{-1}$ is also $R$-sectorial of $R$-type $\omega$.
\end{lemma}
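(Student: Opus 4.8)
The plan is to reduce everything to the resolvent identity (\ref{Inverse}) together with the elementary permanence properties of $R$-boundedness. Since the remark preceding the lemma already guarantees that $A^{-1}$ is sectorial of type $\omega$, it only remains to verify, for an arbitrary $\theta\in(\omega,\pi)$, that the set $\{\lambda R(\lambda,A^{-1})\,:\,\lambda\in\Cdb\setminus\overline{\Sigma_\theta}\}$ is $R$-bounded.

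First I would record the geometric observation that inversion $\lambda\mapsto\lambda^{-1}$ is a bijection of $\Cdb\setminus\overline{\Sigma_\theta}$ onto itself. Indeed, since ${\rm Arg}(\lambda^{-1})=-{\rm Arg}(\lambda)$ we have $\vert{\rm Arg}(\lambda^{-1})\vert=\vert{\rm Arg}(\lambda)\vert$, so $\lambda\notin\overline{\Sigma_\theta}$ if and only if $\lambda^{-1}\notin\overline{\Sigma_\theta}$. Consequently, writing $\mu=\lambda^{-1}$, the family $\{\lambda^{-1}R(\lambda^{-1},A)\,:\,\lambda\in\Cdb\setminus\overline{\Sigma_\theta}\}$ coincides with $\{\mu R(\mu,A)\,:\,\mu\in\Cdb\setminus\overline{\Sigma_\theta}\}$, which is $R$-bounded because $A$ is $R$-sectorial of $R$-type $\omega$ and $\theta>\omega$.

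Next I would invoke the standard stability properties of $R$-boundedness (see e.g. \cite[Chap. 8]{H}): a singleton such as $\{I_X\}$ is trivially $R$-bounded, an $R$-bounded family remains $R$-bounded after multiplication by the fixed scalar $-1$, and the sum of two $R$-bounded families is again $R$-bounded. Applying (\ref{Inverse}), each operator $\lambda R(\lambda,A^{-1})$ equals $I_X$ minus an element of the $R$-bounded family above, so $\{\lambda R(\lambda,A^{-1})\,:\,\lambda\in\Cdb\setminus\overline{\Sigma_\theta}\}$ is contained in the Minkowski difference of two $R$-bounded sets and is therefore $R$-bounded. As $\theta\in(\omega,\pi)$ was arbitrary, this shows that $A^{-1}$ is $R$-sectorial of $R$-type $\omega$.

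Since every step is either the already-established sectoriality of $A^{-1}$, the one-line verification that inversion preserves the complement of $\overline{\Sigma_\theta}$, or a textbook permanence property, I do not expect any genuine obstacle here; if anything, the only point deserving care is checking that the change of variable $\mu=\lambda^{-1}$ sweeps out exactly the same parameter region, which is precisely what the angle computation above provides.
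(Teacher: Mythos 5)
Your proof is correct and follows exactly the paper's intended route: the paper dismisses this lemma as "a straightforward consequence of (\ref{Inverse})", and your argument is precisely that straightforward verification — the change of variable $\mu=\lambda^{-1}$ preserving $\Cdb\setminus\overline{\Sigma_\theta}$, the identity $\lambda R(\lambda,A^{-1})=I_X-\lambda^{-1}R(\lambda^{-1},A)$, and the standard permanence of $R$-boundedness under sums and singletons. Nothing is missing; you have simply written out the details the authors left implicit.
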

 
Let $A$ be a sectorial operator of type $<\frac{\pi}{2}$.
We recall that by \cite{W},  
$A$ is $R$-sectorial of $R$-type $<\frac{\pi}{2}$ if and only if the two sets
\begin{equation}\label{Test1}
\bigl\{e^{-tA}\, :\, t\geq 0\bigr\}
\qquad\hbox{and}\qquad
\bigl\{tA e^{-tA}\, :\, t\geq 0\bigr\}
\end{equation}
are $R$-bounded.

Let $T\in B(X)$. We say that $T$ is a Ritt operator if the two sets
\begin{equation}\label{Test2}
\bigl\{T^n\, :\, n\geq 0\bigr\}
\qquad\hbox{and}\qquad
\bigl\{nT^n(I_X-T)\, :\, n\geq 1\bigr\}
\end{equation}
are bounded.
We further say that $T$ is $R$-Ritt if these two sets are 
$R$-bounded.

These notions are closely related to sectoriality.
Indeed let $\Ddb=\{\lambda\in\Cdb\, :\, \vert \lambda\vert <1\}$ be the open unit disc.
Then
$T$ is a Ritt operator if and only if $\sigma(T)
\subset \Ddb\cup\{1\}$ and $I_X-T$ is sectorial
of type $<\frac{\pi}{2}$. Further in this case, $T$ is $R$-Ritt
if and only if $I_X-T$ is $R$-sectorial
of $R$-type $<\frac{\pi}{2}$. We refer the reader to 
\cite{B,LM1} and the references therein
for these results and various informations 
on Ritt operators and their applications.

We recall from \cite[Section 1.g]{LT} that a Schauder decomposition on $X$ is a 
sequence $\D=\{X_n\, :\, n\geq 1\}$ of closed subspaces 
of $X$ such that for any $x\in X$, there exists a unique sequence $(x_n)_{n\geq 1}$ of $X$ such that 
$x_n\in X_n$ for any $n\geq 1$ and 
$x=\sum_{n=1}^{\infty} x_n.$
For any $n\geq 1$, we let $p_n\in B(X)$ be the projection defined for $x$ as above by
$p_n(x)=x_n$. For any integer $N\geq 1$, consider their sum $P_N = 
\sum_{n=1}^N p_n\,$. This is a projection and the set
\begin{equation}\label{Schauder}
\{P_N\, :\, N\geq 1\}
\end{equation}
is bounded.

We say that $\D$ is an
$R$-Schauder decomposition if this set is actually $R$-bounded. 
Then a Schauder basis is called $R$-Schauder 
if its associated Schauder decomposition is $R$-Schauder.

Let $c=(c_n)_{n\geq 1}$ be a sequence of complex numbers. Assume that the sum
$\sum_{n=1}^{\infty}\vert c_{n} -c_{n+1}\vert\,$ is finite (in which case we  
say that the sequence has a bounded variation). Then $c$ has a limit. 
Let $\ell_c$ denote this limit and set
$$
{\rm var}(c)\,=\, \vert \ell_c\vert \, +\, \sum_{n=1}^{\infty}\vert c_{n} -c_{n+1}\vert\,.
$$
For any $x\in X$, the series $\sum_n c_n p_n(x)\,$ converges. This follows from an 
Abel transformation argument, using the boundedness of $\{P_N\, :\, N\geq 1\}$. Let $M_c\colon X\to X$
be defined by $M_c(x) =\sum_{n=1}^{\infty} c_n p_n(x)$, then we actually have
\begin{equation}\label{Mc}
M_c(x)=\,\ell_c x\,+\, \sum_{N=1}^{\infty}(c_{N} - c_{N+1})P_N(x)\,,\qquad
x\in X.
\end{equation}
This implies that
\begin{equation}\label{NormMc}
\norm{M_c}\leq {\rm var}(c)\,\sup_{N\geq 1}\norm{P_N}.
\end{equation}

Let $(a_n)_{n\geq 1}$ be
a nondecreasing sequence of $(0,\infty)$. Then we may define an operator
$A\colon D(A)\to X$ as follows. We let $D(A)$ be the space of 
all $x\in X$ such that the series $\sum_n a_n p_n(x)\,$ converges and for any
$x\in D(A)$, we set
\begin{equation}\label{A}
A(x)\,=\,\sum_{n=1}^\infty a_n p_n(x).
\end{equation}
Such operators 
were first introduced in \cite{BC, V}.
It is well-known that 
\begin{equation}\label{Spectrum}
\sigma(A)= \overline{\bigl\{a_n\,:\, n\geq 1\bigr\}}
\end{equation}
and that $A$ is a sectorial operator of type $0$ (see \cite{KL,L}). 
More precisely, for any $\lambda\in\Cdb\setminus\Rdb_+$, 
$R(\lambda,A)$ is the operator $M_{c(\lambda)}$ associated with the sequence 
$c(\lambda)=\bigl((\lambda -a_n)^{-1}\bigr)_{n\geq 1}$ and for any $\theta\in (0,\pi)$, we have
\begin{equation}\label{K-theta}
K_\theta = \sup\bigl\{\vert\lambda\vert {\rm var}(c(\lambda))\, : \, \lambda
\in\Cdb\setminus\overline{\Sigma_{\theta}}\bigr\}\, <\infty,
\end{equation}
see e.g. \cite[Section 2]{L}. 
This estimate and (\ref{NormMc}) show that $A$ is sectorial of type $0$.

We note for further use that by (\ref{Spectrum}),
the above operator $A$ is invertible.

In the sequel, any sectorial operator $A$ of this form 
will be called a $\D$-multiplier.

Likewise let $c=(c_n)_{n\geq 1}$
be a nondecreasing sequence of $(0,1)$. Then $c$ has a bounded variation,
which allows the definition of $T=M_c\in B(X)$ given by 
\begin{equation}\label{T}
T(x)\,=\,\sum_{n=1}^\infty c_n p_n(x),\qquad x\in X.
\end{equation}
It turns out that $T$ is a Ritt operator on $X$. Indeed, let $A$
be the sectorial operator (\ref{A}) associated with the sequence $(a_n)_{n\geq 1}$
defined by $a_n=(1-c_n)^{-1}$. 
Then $I_X-T=A^{-1}$ is sectorial of type $0$ and 
$\sigma(T)\subset [0,1]$, which ensures that $T$ is a Ritt operator.

In the sequel, any Ritt operator $T$ of this form 
will be called a $\D$-multiplier.

The following is well-known to specialists. 

\begin{lemma}\label{R-Venni}
Let $\D=\{X_n\, :\, n\geq 1\}$ be an $R$-Schauder decomposition on $X$.
\begin{itemize}
\item [(a)]
Any sectorial operator $A$ on $X$ 
which is a $\D$-multiplier is $R$-sectorial or $R$-type $0$.
\item [(b)] Any Ritt operator $T\in B(X)$ which is a $\D$-multiplier is $R$-Ritt. 
\end{itemize}
\end{lemma}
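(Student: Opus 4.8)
The plan is to treat part (a) as the substantive statement and to deduce part (b) from it via the resolvent of the inverse operator. The guiding principle for (a) is that a $\D$-multiplier resolvent, after the Abel-type rearrangement (\ref{Mc}), is an absolutely convergent combination of the partial-sum projections $P_N$ whose total weight is controlled by the sectoriality constant $K_\theta$ of (\ref{K-theta}); since $R$-boundedness is stable under forming absolutely convex combinations, the $R$-boundedness of $\{P_N\, :\, N\geq 1\}$ furnished by the $R$-Schauder hypothesis will transfer to the whole resolvent family.

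For part (a), let $A$ be a $\D$-multiplier associated with a nondecreasing sequence $(a_n)_{n\geq 1}$ of $(0,\infty)$, and fix $\theta\in(0,\pi)$. For $\lambda\in\Cdb\setminus\overline{\Sigma_\theta}$ we have $R(\lambda,A)=M_{c(\lambda)}$ with $c(\lambda)=\bigl((\lambda-a_n)^{-1}\bigr)_{n\geq 1}$, hence $\lambda R(\lambda,A)=M_{\lambda c(\lambda)}$. Applying (\ref{Mc}) to the sequence $\lambda c(\lambda)$, I would write
\begin{equation*}
\lambda R(\lambda,A)(x)=\bigl(\lambda\ell_{c(\lambda)}\bigr)x+\sum_{N=1}^\infty\lambda\bigl(c_N(\lambda)-c_{N+1}(\lambda)\bigr)P_N(x),\qquad x\in X.
\end{equation*}
The crucial point is that (\ref{K-theta}) bounds the total $\ell^1$-mass of the scalar coefficients occurring here, since
\begin{equation*}
\bigl|\lambda\ell_{c(\lambda)}\bigr|+\sum_{N=1}^\infty\bigl|\lambda\bigl(c_N(\lambda)-c_{N+1}(\lambda)\bigr)\bigr|=|\lambda|\,{\rm var}(c(\lambda))\leq K_\theta
\end{equation*}
uniformly in $\lambda\in\Cdb\setminus\overline{\Sigma_\theta}$. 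Thus every operator $\lambda R(\lambda,A)$ lies in $K_\theta$ times the absolutely convex hull of $\{I_X\}\cup\{P_N\,:\,N\geq 1\}$. This set is $R$-bounded: the $R$-Schauder assumption gives the $R$-boundedness of $\{P_N\,:\,N\geq 1\}$, and $I_X$ is the strong limit of the $P_N$ (as $P_N(x)\to x$ for each $x$), so it lies in the $R$-bounded strong closure. Invoking the stability of $R$-boundedness under complex absolutely convex combinations and strong closure (see e.g. \cite[Chap. 8]{H}), I conclude that $\bigl\{\lambda R(\lambda,A)\,:\,\lambda\in\Cdb\setminus\overline{\Sigma_\theta}\bigr\}$ is $R$-bounded, which is precisely the assertion that $A$ is $R$-sectorial of $R$-type $0$.

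For part (b), let $T=M_c$ be a $\D$-multiplier Ritt operator with $c=(c_n)_{n\geq 1}$ nondecreasing in $(0,1)$, and let $A$ be the $\D$-multiplier sectorial operator associated with $a_n=(1-c_n)^{-1}$, so that $I_X-T=A^{-1}$. By part (a), $A$ is $R$-sectorial of $R$-type $0$, and since $A$ is invertible, Lemma \ref{R-Inv} shows that $A^{-1}$ is $R$-sectorial of $R$-type $0$ as well; in particular $I_X-T=A^{-1}$ is $R$-sectorial of $R$-type $<\frac{\pi}{2}$. By the characterization recalled above, namely that $T$ is $R$-Ritt exactly when $I_X-T$ is $R$-sectorial of $R$-type $<\frac{\pi}{2}$, this gives that $T$ is $R$-Ritt. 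The only delicate point of the whole argument is the passage from finite combinations to the infinite series and the identity term in part (a): one must know that $R$-boundedness survives both the closed absolutely convex hull and the strong operator closure, and that passing from real to complex coefficients costs at most a fixed factor via the Kahane contraction principle. These are standard facts, so I expect no genuine obstacle, the argument being essentially a bookkeeping of the $\ell^1$-bound (\ref{K-theta}) against the $R$-bound of $\{P_N\}$.
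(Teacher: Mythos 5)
Your proof is correct and follows essentially the same route as the paper: both express $\lambda R(\lambda,A)$ via (\ref{Mc}) as an absolutely convergent combination of the projections $P_N$ with total coefficient mass at most $K_\theta$ from (\ref{K-theta}), place it inside $K_\theta\cdot\overline{{\rm aco}}^{so}\bigl(\{P_N\,:\,N\geq 1\}\bigr)$, invoke the stability of $R$-boundedness under strongly closed absolutely convex hulls \cite[Subsection 8.1.e]{H}, and then deduce (b) from (a) together with Lemma \ref{R-Inv} and the characterization of $R$-Ritt via $R$-sectoriality of $I_X-T$. The only (harmless) difference is that you keep the constant term $\lambda\ell_{c(\lambda)}I_X$ and absorb $I_X$ into the strong operator closure of $\{P_N\,:\,N\geq 1\}$, whereas the paper instead assumes $\lim_n a_n=\infty$ so that this term vanishes.
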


\begin{proof}
Let $F=\{ P_N\, :\, N\geq 1\}$. Let $A$ be given by (\ref{A}) and let $\theta\in(0,\pi)$. 
We may assume that $\lim_n a_n =\infty$.  
It follows from the above discussion
that for any $\lambda\in\Cdb\setminus\Rdb_+$,
$$
\bigr[\lambda R(\lambda,A)\bigr] (x)\,=\, \sum_{N=1}^{\infty}
\,\lambda\bigl(c(\lambda)_{N} -c(\lambda)_{N+1}\bigr)\, P_N(x),\qquad x\in X,
$$
with $c(\lambda)=\bigl((\lambda -a_n)^{-1}\bigr)_{n\geq 1}$.
This implies that
$$
\bigl\{\lambda R(\lambda,A)\, :\, \lambda\in\Cdb\setminus\overline{\Sigma_\theta}\bigr\}
\,\subset K_\theta \,\cdotp\overline{{\rm aco}}^{so}(F),
$$
where $K_\theta$ is given by (\ref{K-theta})
and $\overline{{\rm aco}}^{so}(F)$ stands for the the closure of the absolute convex 
hull of $F$ in the strong operator topology of $B(X)$. Since $F$
is $R$-bounded, $\overline{{\rm aco}}^{so}(F)$ is $R$-bounded as well, see e.g. \cite[Subsection 8.1.e]{H}. Then the set
(\ref{Sectorial}) is $R$-bounded, which shows (a).

Let $T$ be given by (\ref{T}). It follows from the above discussion that 
$T=I_X - A^{-1}$ for some  sectorial operator $A$ on $X$ 
which is a $\D$-multiplier. By (a) and Lemma \ref{R-Inv}, $A^{-1}$ is
$R$-sectorial of type $<\frac{\pi}{2}$. This entails that $T$ 
is $R$-Ritt.
\end{proof}

Our main result is the following.

\begin{theorem}\label{Construction}
Let $\D$ be a Schauder decomposition on $X$ and assume that $\D$ is not $R$-Schauder. 
\begin{itemize}
\item [(a)] There exists a sectorial operator $A$ on $X$ which is a  $\D$-multiplier, such that
the set
$$
\bigl\{e^{-tA^{-1}}\, :\, t\geq 0\bigr\}
$$
is not $R$-bounded. 
\item [(b)] There exists a Ritt operator $T\in B(X)$ which is a  $\D$-multiplier, such that the set 
$$
\bigl\{T^n\, :\, n\geq 0\bigr\}
$$
is not $R$-bounded.
\end{itemize}
\end{theorem}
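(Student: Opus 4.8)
The plan is to treat (a) and (b) in parallel, since in both cases the relevant operator family consists of multipliers acting diagonally on $\D$: for (b) one has $T^k=M_{(c_n^k)}$, while for (a) one has $e^{-tA^{-1}}=M_{(e^{-t/a_n})}$. I would exploit the fact that such a multiplier is close, in the operator norm, to a co-projection $I_X-P_N$ as soon as its symbol is close to the indicator of $\{n>N\}$, and that by (\ref{NormMc}) this closeness is controlled by the variation of the symbol error. Because the symbols here are nondecreasing, the natural targets are the $I_X-P_N$ rather than the $P_N$ themselves; this is harmless because $\{I_X-P_N:N\geq 1\}$ is $R$-bounded if and only if $\{P_N:N\geq 1\}$ is (one passes between the two families by adding or subtracting $I_X$, which changes the $R$-bound by at most $1$). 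The whole point is therefore to manufacture a single $\D$-multiplier whose orbit norm-approximates every $I_X-P_N$ along a suitable sequence of scales.

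For the construction I would fix a sequence $(\delta_n)_{n\geq 1}$ of $(0,\infty)$ decreasing to $0$ extremely fast, and set $c_n=e^{-\delta_n}$ in case (b) and $a_n=\delta_n^{-1}$ in case (a). Then $(c_n)$ is nondecreasing in $(0,1)$ and $(a_n)$ is nondecreasing in $(0,\infty)$ with $a_n\to\infty$, so $T=M_{(c_n)}$ is a Ritt $\D$-multiplier and $A=M_{(a_n)}$ is an invertible sectorial $\D$-multiplier with $A^{-1}=M_{(\delta_n)}$ bounded; both orbits then have the common form $M_{(e^{-s\delta_n})_n}$, with $s=k\in\Ndb$ in case (b) and $s=t\geq 0$ in case (a). For each $N$ I would select the scale $s_N=(\delta_N\delta_{N+1})^{-1/2}$ (rounded to an integer in case (b)). Writing $\eta_N=(\delta_N/\delta_{N+1})^{1/2}$, one has $s_N\delta_N=\eta_N$ and $s_N\delta_{N+1}=\eta_N^{-1}$, so the symbol $e^{-s_N\delta_n}$ is at most $e^{-\eta_N}$ for $n\leq N$ and at least $e^{-1/\eta_N}$ for $n>N$; since $(\delta_n)$ decreases, these two bounds control all remaining indices as well. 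Thus the symbol of $M_{(e^{-s_N\delta_n})_n}$ approximates the indicator of $\{n>N\}$, with sharpness governed by $\eta_N\to\infty$.

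I would then quantify this through (\ref{NormMc}). Computing the variation of the error symbol $e_N$ defined by $e_{N,n}=e^{-s_N\delta_n}-1_{\{n>N\}}$, by splitting into the part $n\leq N$, the jump at $n=N$, and the tail $n>N$, yields
$$
\bignorm{M_{(e^{-s_N\delta_n})_n}-(I_X-P_N)}\,\leq\,{\rm var}(e_N)\,\sup_{M\geq 1}\norm{P_M}\,\leq\,\bigl(2e^{-\eta_N}+2\eta_N^{-1}\bigr)\sup_{M\geq 1}\norm{P_M}\,=:\,\epsilon_N.
$$
Choosing $(\delta_n)$ so fast-decreasing that $\eta_N\geq N^2$ (for instance $\delta_n=2^{-n^3}$) makes $\sum_N\epsilon_N<\infty$. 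Finally I would invoke the elementary stability of $R$-boundedness under summable perturbations: any family $\{S_N\}$ with $\sum_N\norm{S_N}<\infty$ is $R$-bounded, with $R$-bound at most $\sum_N\norm{S_N}$ (group a test configuration by its repeated index and apply the contraction principle). Hence, were the orbit $R$-bounded, the subfamily $\{M_{(e^{-s_N\delta_n})_n}:N\geq 1\}$ would be $R$-bounded; adding the summable perturbation $S_N=M_{(e^{-s_N\delta_n})_n}-(I_X-P_N)$ would then force $\{I_X-P_N:N\geq 1\}$, and hence $\{P_N:N\geq 1\}$, to be $R$-bounded, contradicting the hypothesis that $\D$ is not $R$-Schauder.

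The main obstacle is exactly the error control in this last step: one must be certain that the discrepancy between the orbit and the co-projections can be absorbed without re-introducing the (non-$R$-bounded) family $\{P_N\}$. This is what dictates the two design choices above, namely approximating in operator norm via (\ref{NormMc}) and driving $\eta_N\to\infty$ fast enough that $\sum_N\epsilon_N<\infty$, so that the perturbation family is itself $R$-bounded by a crude norm-summability argument rather than through the decomposition. The remaining points (that $(c_n)$ and $(a_n)$ are legitimately monotone, and that $A$ is invertible with $A^{-1}$ bounded) are routine consequences of the choices $c_n=e^{-\delta_n}$ and $a_n=\delta_n^{-1}$.
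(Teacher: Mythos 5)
Your proposal is correct and follows essentially the same strategy as the paper's own proof: build the multiplier so that its orbit approximates the co-projections $Q_N=I_X-P_N$ in operator norm along a rapidly growing sequence of times/powers (the paper uses $a_n=(n!)^3$ and $t_N=N(N!)^3$ where you use $\delta_n=2^{-n^3}$ and $s_N=(\delta_N\delta_{N+1})^{-1/2}$), make the errors summable, and conclude by a perturbation argument that $R$-boundedness of the orbit would force $R$-boundedness of $\{P_N\}$. Your route to the key norm estimate---applying (\ref{NormMc}) to the error symbol $e^{-s_N\delta_n}-1_{\{n>N\}}$---is a clean repackaging of the paper's explicit Abel-summation computation, and your explicit summable-norm $R$-boundedness lemma makes the final perturbation step slightly more self-contained, but these are presentational differences, not a different proof.
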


\begin{proof}
We introduce $Q_N=I_X-P_N$ for any $N\geq 1$. The idea of the proof is to construct $A$ (resp. $T$) such that 
each $Q_N$ is close to $e^{-tA^{-1}}$ for some $t\geq 0$ (resp. to 
$T^n$ for some $n\geq 0$). 

Let $c=(c_n)_{n\geq 1}$ be a complex sequence with a bounded variation and let $N\geq 1$ be a fixed 
integer. For any $x\in X$, $Q_N(x)= \sum_{n=N+1}^{\infty} p_n(x)$ hence
$$
\bigl(M_c-Q_N\bigr)(x)\,=\, \sum_{n=1}^N c_n p_n(x)\, +\, \sum_{n=N+1}^{\infty} (c_n-1)p_n(x).
$$
On the one hand, we have
$$
\sum_{n=1}^N c_n p_n(x)\, =\, c_NP_N(x) + \,\sum_{n=1}^{N-1} (c_n -c_{n+1})P_n(x).
$$
On the other hand, 
\begin{align*}
\sum_{n=N+1}^{\infty} (c_n-1)p_n(x)\,
&=\, \sum_{n=N+1}^{\infty} (c_n-1)\bigl(Q_{n-1}(x) -Q_n(x)\bigr)\\
&=\, (c_{N+1} -1)Q_N(x) +\, \sum_{n=N+1}^{\infty} (c_{n+1}-c_n)Q_n(x).
\end{align*}
Let $K=\sup_{N\geq 1}\norm{P_N}$. If follows from these identities that
$$
\bignorm{M_c-Q_N}\,\leq (1+K)\biggl(\sum_{n=1}^{N-1}\vert c_{n+1} - c_n\vert\,
+\vert c_N\vert +\vert 1- c_{N+1}\vert 
+\, \sum_{n=N+1}^{\infty} \vert c_{n+1}-c_n\vert\biggr).
$$

Let $(a_n)_{n\geq 1}$ be a nondecreasing sequence of $(0,\infty)$, 
with $\lim_n a_n=\infty$, and let 
$A$ be the associated sectorial operator defined by (\ref{A}). Let $t>0$ 
and apply the above with 
$$
c_n = e^{-ta_n^{-1}},\qquad 
n\geq 1. 
$$
Then
$c_n\in(0,1)$ for any $n\geq 1$, the sequence
$(c_n)_{n\geq 1}$ is nondecreasing (hence has a bounded variation)
and $M_c=e^{-tA^{-1}}$. Further $\lim_n c_n = 1$. 
Consequently we have 
$$
\vert c_N\vert  = c_N = e^{-ta_N^{-1}},\qquad
\vert 1- c_{N+1}\vert = 1-c_{N+1} = 1 -e^{-ta_{N+1}^{-1}},
$$
$$
\sum_{n=1}^{N-1}\vert c_{n+1} - c_n\vert \,= c_N -c_1 \leq c_N = e^{-ta_N^{-1}},
$$
$$
\sum_{n=N+1}^{\infty} \vert c_{n+1}-c_n\vert \, = 1 -c_{N+1} = 1 -e^{-ta_{N+1}^{-1}},
$$
and hence
\begin{equation}\label{Norm-Diff}
\bignorm{e^{-t A^{-1}} - Q_N}\leq 2(1+K)\bigl(e^{-ta_N^{-1}} + (1-e^{-ta_{N+1}^{-1}})\bigr).
\end{equation}

We apply the above with 
$$
a_n= (n!)^3,\qquad n\geq 1.
$$ 
Next we consider the sequence $(t_N)_{N\geq 1}$ of positive integers given by we set 
$$
t_N= N(N!)^3,\qquad N\geq 1.
$$
Then by (\ref{Norm-Diff}), we have
$$
\bignorm{e^{-t_N A^{-1}} - Q_N}\leq 2(1+K)\bigl(e^{-N} + (1-e^{-\frac{N}{(N+1)^3}})\bigr)
\leq 2(1+K)\bigl(e^{-N} +N^{-2}\bigr)
$$
for any $N\geq 1$.
This estimate implies that
$$
\sum_{N=1}^{\infty} \bignorm{e^{-t_N A^{-1}} - Q_N}\,<\infty.
$$

Let $C$ be the above sum. Then for any $x_1,\ldots,x_k$ in $X$, we have
\begin{align*}
\Bignorm{\sum_{N=1}^k\varepsilon_N\otimes \bigl(e^{-t_N A^{-1}} - Q_N\bigr)(x_N)}_{R,X}\,
&\leq \,\sum_{N=1}^k \bignorm{e^{-t_N A^{-1}} - Q_N}\norm{x_N}\\ 
&\leq\, C\,\sup\bigl\{\norm{x_N}\, :\, 1\leq N\leq k\bigr\}\\
&\leq \, C\,\Bignorm{\sum_{N=1}^k\varepsilon_N\otimes x_N}_{R,X}.
\end{align*}
Hence 
$$
\Bignorm{\sum_{N=1}^k\varepsilon_N\otimes Q_N(x_N)}_{R,X}\,\leq\,
\Bignorm{\sum_{N=1}^k\varepsilon_N\otimes e^{-t_N A^{-1}}(x_N)}_{R,X}\,
+\, C\,\Bignorm{\sum_{N=1}^k\varepsilon_N\otimes x_N}_{R,X}.
$$
By assumption, the set $\{P_N\, :\, N\geq 1\}$ is not $R$-bounded, hence 
$\{Q_N\, :\, N\geq 1\}$ is nor $R$-bounded. The above estimate therefore
shows that the set $\bigl\{e^{-tA^{-1}}\, :\, t\geq 0\bigr\}$ cannot be $R$-bounded.
This proves (a).

To prove (b), we consider $T=e^{-A^{-1}}$. Then $T$ is the Ritt operator 
defined by $(\ref{T})$ for the sequence $c_n=e^{-a_n^{-1}}$. For any
$N\geq 1$, $t_N$ is an integer and $T^{t_N} =e^{-t_N A^{-1}}$. Hence 
the above argument shows that $\{T^n\, :\, n\geq 1\}$
is not $R$-bounded, which proves (b).
\end{proof}

Theorem \ref{Construction} provides 
a converse to Lemma \ref{R-Venni}, as follows.

\begin{corollary}
Let $\D$ be a Schauder decomposition on $X$. Then $\D$ is $R$-Schauder if and only if 
any sectorial operator on $X$ which is a $\D$-multiplier is $R$-sectorial, 
if and only if any Ritt operator on $X$ which is a $\D$-multiplier is $R$-Ritt.
\end{corollary}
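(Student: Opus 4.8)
The plan is to prove the chain of equivalences by naming the three conditions: write (i) for ``$\D$ is $R$-Schauder'', (ii) for ``every sectorial $\D$-multiplier is $R$-sectorial (of $R$-type $0$)'', and (iii) for ``every Ritt $\D$-multiplier is $R$-Ritt''. The implications (i)$\Rightarrow$(ii) and (i)$\Rightarrow$(iii) are precisely parts (a) and (b) of Lemma \ref{R-Venni}, so no new work is needed for them; the two converses are what remain, and I would establish each by contraposition, feeding in Theorem \ref{Construction}.

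For (iii)$\Rightarrow$(i) the contrapositive is immediate. Assuming $\D$ is not $R$-Schauder, Theorem \ref{Construction}(b) supplies a Ritt operator $T$ which is a $\D$-multiplier and for which $\{T^n\, :\, n\geq 0\}$ is not $R$-bounded. Since $R$-boundedness of $\{T^n\, :\, n\geq 0\}$ is built into the definition of an $R$-Ritt operator, this $T$ is not $R$-Ritt, so (iii) fails.

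For (ii)$\Rightarrow$(i), again by contraposition, assume $\D$ is not $R$-Schauder and let $A$ be the sectorial $\D$-multiplier produced by Theorem \ref{Construction}(a), so that $\{e^{-tA^{-1}}\, :\, t\geq 0\}$ is not $R$-bounded. I claim $A$ is not $R$-sectorial of $R$-type $0$ (the notion appearing in Lemma \ref{R-Venni}(a)). Indeed, were $A$ $R$-sectorial of $R$-type $0$, it would be $R$-sectorial of $R$-type $\omega$ for some $\omega<\frac{\pi}{2}$; since $A$ is invertible (as recorded after (\ref{Spectrum})), Lemma \ref{R-Inv} would then make $A^{-1}$ $R$-sectorial of the same $R$-type $\omega<\frac{\pi}{2}$. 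But $A^{-1}$ is sectorial of type $0<\frac{\pi}{2}$, so $-A^{-1}$ generates a bounded analytic semigroup and the characterization (\ref{Test1}) would force $\{e^{-tA^{-1}}\, :\, t\geq 0\}$ to be $R$-bounded, contradicting the choice of $A$. Hence $A$ is a sectorial $\D$-multiplier that is not $R$-sectorial, and (ii) fails. Chaining the four implications gives (i)$\Leftrightarrow$(ii) and (i)$\Leftrightarrow$(iii), which is the assertion.

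The only genuinely nontrivial step is this last one: transferring the failure of $R$-boundedness of the semigroup $(e^{-tA^{-1}})_{t\geq 0}$ back to a failure of $R$-sectoriality of $A$ itself. The mechanism is to pass to the inverse via Lemma \ref{R-Inv} and then apply the semigroup test (\ref{Test1}); for this it is essential that $A$ be invertible and that $A^{-1}$ be sectorial of type $0$, which is exactly what the construction guarantees. Everything else reduces to quoting Lemma \ref{R-Venni} and reading off Theorem \ref{Construction}.
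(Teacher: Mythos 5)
Your handling of the Ritt equivalence and of the two forward implications is correct and matches the paper; the problem is in your implication (ii)$\Rightarrow$(i), and it stems from your parenthetical reinterpretation of (ii) as ``every sectorial $\D$-multiplier is $R$-sectorial \emph{of $R$-type $0$}''. In the corollary ``$R$-sectorial'' is unqualified, and the paper's own proof reads it in the standard way: $R$-sectorial of $R$-type $\omega$ for \emph{some} $\omega\in(0,\pi)$. With that reading your contradiction argument does not close the loop: starting from the operator $A$ of Theorem \ref{Construction}(a), you can only rule out that $A$ is $R$-sectorial of $R$-type $<\frac{\pi}{2}$, because Lemma \ref{R-Inv} preserves the $R$-type and the semigroup test (\ref{Test1}) characterizes $R$-sectoriality of $R$-type $<\frac{\pi}{2}$ only. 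Nothing in your argument excludes that $A$ (and, for all you have shown, every sectorial $\D$-multiplier) is $R$-sectorial of some large $R$-type, say $\frac{3\pi}{4}$, even though $\D$ fails to be $R$-Schauder. So under the intended reading, the implication you need is not established; what you have proved is the weaker equivalence in which ``$R$-sectorial'' is replaced by ``$R$-sectorial of $R$-type $0$'' throughout.

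The paper's device for closing exactly this gap is to change the witness: it takes $B=A^2$, which is again a sectorial $\D$-multiplier (associated with the nondecreasing sequence $(a_n^2)_{n\geq 1}$). If $B$ were $R$-sectorial of \emph{any} $R$-type $\omega\in(0,\pi)$, then $B^{-1}$ would be $R$-sectorial of $R$-type $\omega$ by Lemma \ref{R-Inv}, and the square-root theorem \cite[Proposition 3.4]{KKW} would then make $A^{-1}=(B^{-1})^{\frac12}$ $R$-sectorial of $R$-type $\frac{\omega}{2}<\frac{\pi}{2}$; now (\ref{Test1}) forces $\{e^{-tA^{-1}}\,:\,t\geq 0\}$ to be $R$-bounded, contradicting the choice of $A$ in Theorem \ref{Construction}(a). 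The angle-halving supplied by the square root is precisely what lets the argument handle an arbitrary $R$-type $\omega$, and it is the step your proof is missing; without it (or some substitute), the unqualified version of (ii)$\Rightarrow$(i) does not follow from Theorem \ref{Construction}(a) alone.
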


\begin{proof}
Let $\D$ be a Schauder decomposition on $X$ which is not $R$-Schauder.
Let $A$ be verifying (a) in Theorem \ref{Construction}, and let $B=A^2$.
Then $B$ is a sectorial operator 
on $X$ which is a $\D$-multiplier. Assume that $B$ is $R$-sectorial, with some 
$R$-type $\omega\in (0,\pi)$. By 
Lemma \ref{R-Inv}, its inverse $B^{-1}$ is $R$-sectorial $R$-type $\omega$
as well. Hence by
\cite[Proposition 3.4]{KKW}, $A^{-1} =(B^{-1})^{\frac12}$ is $R$-sectorial of $R$-type $\frac{\omega}{2}
<\frac{\pi}{2}$.
This implies (see (\ref{Test1})) that the set $\{e^{-tA^{-1}}\, :\, t\geq 0\}$ is $R$-bounded,
a contradiction. Hence $B$ is not $R$-sectorial. 

Combining the above fact 
with Theorem \ref{Construction} (b) and Lemma \ref{R-Venni}, 
we deduce both `if and only if' results.
\end{proof}

It follows from \cite{F,KL} that if $X$ has an unconditional
basis and $X$ is not isomorphic to a Hilbert space, then $X$ has a Schauder
basis which is not $R$-Schauder. The above theorem therefore applies to all these spaces.

Further the arguments in \cite[Theorem 3.7 $\&$ Corollary 3.8]{KL} 
show that we actually have the following.

\begin{corollary}\label{Cor} Let $X$ be isomorphic to a separable Banach lattice and 
assume that $X$ is not isomorphic to a Hilbert space.
\begin{itemize}
\item [(a)] There exists a bounded sectorial operator $A$ of type $0$ on $X$ such that
$\{e^{-tA}\, :\, t\geq 0\}$ is not $R$-bounded.
\item [(b)] There exists a Ritt operator $T\in B(X)$ such that the set 
$\{T^n\, :\, n\geq 0\}$
is not $R$-bounded.
\end{itemize}
\end{corollary}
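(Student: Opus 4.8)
The plan is to combine Theorem~\ref{Construction} with the Banach-lattice structure theory underlying \cite[Theorem 3.7 \& Corollary 3.8]{KL}. The crucial structural input I would extract from those arguments is the following: if $X$ is isomorphic to a separable Banach lattice that is not isomorphic to a Hilbert space, then $X$ possesses a closed \emph{complemented} subspace $Y$ carrying a Schauder decomposition $\D_0=\{Y_n : n\geq 1\}$ which is \emph{not} $R$-Schauder. This is precisely where the hypothesis enters: the lattice structure supplies disjointly supported sequences whose closed spans are complemented, and the failure of $X$ to be Hilbertian forces, for some $p\neq 2$, uniformly complemented copies of $\ell_p^n$ inside $X$ (or, when $c_0$ embeds in $X$, a complemented copy of $c_0$ via Sobczyk's theorem), and the partial-sum projections associated with the resulting decomposition cannot be $R$-bounded. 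I expect this step to be the main obstacle, as it is the only place where genuine lattice theory is required; note that when $X$ already has an unconditional basis one simply takes $Y=X$, recovering the situation of the remark preceding the statement.

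Granting this, fix a bounded projection $P\colon X\to Y$ and write $X=Y\oplus Z$ with $Z=\ker P$. For part (a), apply Theorem~\ref{Construction}(a) to the pair $(Y,\D_0)$: it yields a sectorial operator $A_0$ on $Y$ which is a $\D_0$-multiplier and for which $\{e^{-tA_0^{-1}} : t\geq 0\}$ is not $R$-bounded in $B(Y)$. Since $A_0$ is invertible with $A_0^{-1}\in B(Y)$, and since by (\ref{Inverse}) the inverse of an invertible sectorial operator of type $0$ is again sectorial of type $0$, the operator $A_0^{-1}$ is a bounded sectorial operator of type $0$ on $Y$ whose semigroup is not $R$-bounded. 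I would then define
$$
A\,=\,A_0^{-1}\oplus I_Z\,\in B(X).
$$
A direct sum of two bounded operators that are sectorial of type $0$ is again bounded and sectorial of type $0$, so $A$ qualifies, and $e^{-tA}=e^{-tA_0^{-1}}\oplus e^{-t}I_Z$ leaves $Y$ invariant while restricting on $Y$ to $e^{-tA_0^{-1}}$.

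It remains to transfer the failure of $R$-boundedness from $Y$ back to $X$. For this I would invoke the elementary fact that if a family in $B(X)$ leaves a closed subspace $Y$ invariant and is $R$-bounded, then its restriction to $Y$ is $R$-bounded with the same constant (indeed the norm $\norm{\cdot}_{R,Y}$ is the restriction of $\norm{\cdot}_{R,X}$ to tuples in $Y$). Taking the contrapositive, since $\{e^{-tA_0^{-1}} : t\geq 0\}$ is not $R$-bounded in $B(Y)$, the family $\{e^{-tA} : t\geq 0\}$ cannot be $R$-bounded in $B(X)$; this proves (a). For (b) the scheme is identical: Theorem~\ref{Construction}(b) provides a Ritt operator $T_0\in B(Y)$ which is a $\D_0$-multiplier and for which $\{T_0^n : n\geq 0\}$ is not $R$-bounded, and I would set $T=T_0\oplus\tfrac12 I_Z$. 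Then $T$ is a Ritt operator on $X$, since $\sigma(T)=\sigma(T_0)\cup\{\tfrac12\}\subset\Ddb\cup\{1\}$ and $I_X-T=(I_Y-T_0)\oplus\tfrac12 I_Z$ is a direct sum of operators that are sectorial of type $<\tfrac{\pi}{2}$; as $T$ leaves $Y$ invariant with $T|_Y=T_0$, the same restriction argument shows that $\{T^n : n\geq 0\}$ is not $R$-bounded in $B(X)$.
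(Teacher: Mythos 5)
Your proposal is correct and takes essentially the same route as the paper: the paper's proof of this corollary is precisely the citation of \cite[Theorem 3.7 \& Corollary 3.8]{KL} for the structural fact you isolate (a complemented subspace of $X$ carrying a Schauder decomposition that is not $R$-Schauder), combined with Theorem~\ref{Construction}. Your direct-sum extension $A_0^{-1}\oplus I_Z$ (resp.\ $T_0\oplus\tfrac12 I_Z$) and the restriction-to-an-invariant-subspace argument simply make explicit the transfer from $Y$ to $X$ that the paper leaves implicit.
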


\begin{remark}\label{Rk} This final remark compares the above corollary with 
existing results. Let $X$ be isomorphic to a separable Banach lattice 
without being isomorphic to a Hilbert space.

\smallskip 
{\bf (1)} It follows from \cite{P} that there exists a Ritt operator 
$T\in B(X)$ such that $T$ is not $R$-Ritt. 
Recall that by definition, $T$ is not $R$-Ritt
if and only if one of the two sets in (\ref{Test2}) is not $R$-bounded.
Part (b) of Corollary \ref{Cor} strengthens \cite{P} by providing a Ritt operator $T$ on $X$
for which we know that the first of the two sets in (\ref{Test2}) is 
not $R$-bounded. This is an important step in the understanding of
the class of power bounded operators $T$ such that
$\{T^n\, :\, n\geq 0\}$ is
$R$-bounded. This class will be investigated in a future paper
(in preparation).
We refer to \cite{LM0} for the study of invertible 
operators $T\in B(X)$ such that $\{T^n\, :\, n\in\Zdb\}$ is
$R$-bounded.

\smallskip 
{\bf (2)} The existence of a sectorial operators $A$ of type $0$ on $X$ such that
$\{e^{-tA}\, :\, t\geq 0\}$ is not $R$-bounded follows from \cite{P}. 
Part (a) of Corollary \ref{Cor} shows that this can be achieved with a bounded $A$. 
We refer to \cite{A} for various results on bounded $C_0$-semigroups
$(T_t)_{t\geq 0}$ on Banach space such thatthe set  $\{T_t\, :\, t\geq 0\}$ is/is not
$R$-bounded.
\end{remark}

\bigskip
\noindent
{\bf Acknowledgements.} 
The authors were supported by the French 
``Investissements d'Avenir" program, 
project ISITE-BFC (contract ANR-15-IDEX-03).

\bigskip

\vskip 1cm

\ 
\end{document}